\newtheorem{theorem}{Theorem}
\theoremstyle{definition}
\newtheorem{question}[theorem]{Question}
\theoremstyle{remark}
\newtheorem{remark}[theorem]{Remark}
\begin{document}

\title[Invariance of distributional chaos for backward shifts]{Invariance of distributional chaos for backward shifts}


\author[X. Wu]{Xinxing Wu}
\address[X. Wu]{School of Sciences, Southwest Petroleum University, Chengdu, Sichuan 610500, P.R. China}
\email{wuxinxing5201314@163.com}

\author[Y. Luo]{Yang Luo}
\address[Y. Luo]{School of Sciences, Southwest Petroleum University, Chengdu, Sichuan, 610500, P.R. China}
\email{email of the Second Author}



\subjclass[2010]{47A16, 54H20.}

\date{\today}


\keywords{backward shift; K\"{o}the sequence space; distributional chaos;
invariant set}

\begin{abstract}
A sufficient and necessary condition ensuring that the backward shift operator on the K\"{o}the sequence space admits an invariant distributionally
$\varepsilon$-scrambled set for some $\varepsilon>0$ is obtained, improving the main results in [F. Mart\'{\i}nez-Gim\'{e}nez, P. Oprocha, A. Peris,
J. Math. Anal. Appl., {\bf 351} (2009), 607--615].
\end{abstract}

\maketitle

Let $\mathbb{N}=\{1, 2, 3, \ldots\}$ and $\mathbb{Z}^+=\{0, 1, 2, \ldots\}$. According to \cite{Kothe}, an infinite matrix
$A=(a_{j,k})_{j, k\in \mathbb{N}}$ is called a {\it K\"{o}the matrix} if for every $j\in \mathbb{N}$ there exists some
$k\in \mathbb{N}$ with $a_{j, k}>0$ and $0\leq a_{j, k}\leq a_{j, k+1}$ for all $j, k\in \mathbb{N}$.

Consider the {\it backward shift} defined by
$$
B(x_{1}, x_{2}, x_{3}, \ldots)=(x_{2}, x_{3}, x_{4}, \ldots)
$$
on the K\"{o}the sequence space $\lambda_{p}(A)$ determined by a K\"{o}the matrix $A$, where, for $1\leq p<+\infty$,
$$
\lambda_{p}(A):=\left\{x\in \mathbb{K}^{\mathbb{N}}:
\|x\|_{k}:=\left(\sum_{j=1}^{\infty}|x_{j}a_{j,
k}|^{p}\right)^{1/p}<\infty, \forall k\in \mathbb{N}\right\},
$$
and, for $p=0$,
$$
\lambda_{0}(A):=\left\{x\in \mathbb{K}^{\mathbb{N}}:
\lim_{j\rightarrow \infty}x_{j}a_{j, k}=0, \|x\|_{k}:=\sup_{j\in
\mathbb{N}}|x_{j}a_{j, k}|, \forall k\in \mathbb{N}\right\}.
$$
It is possible to define a complete metric on $\lambda_{p}(A)$ which is invariant by translation:
$$
d(x, y)=\sum_{n=1}^{\infty}\frac{1}{2^{n}}\frac{\|x-y\|_{n}}{1+\|x-y\|_{n}}.
$$

The operator $B: \lambda_{p}(A)\longrightarrow \lambda_{p}(A)$ is continuous and well-defined if
and only if the following condition on the matrix $A$ is satisfied:
\begin{equation}\label{2.5}
\forall n\in \mathbb{N}, \ \exists m>n \text{ such that } \sup_{j\in
\mathbb{N}}\left|\frac{a_{j, n}}{a_{j+1, m}}\right|<+\infty,
\end{equation}
where in the case of $a_{j+1, m}=0$, one has $a_{j, n}=0$ and we consider $\frac{0}{0}$ as 1 (see \cite{Kothe}).

For simplicity, throughout this paper, for any $x=(x_{1}, x_{2}, x_{3}, \ldots)\in \lambda_{p}(A)$ and any $k, n\in \mathbb{Z}^{+}$,
denote
\[
(x)_{k}:=x_{k}, \ x(k):=B^{k}(x)=(x_{k+1}, x_{k+2}, x_{k+3},
\ldots),
\]
\[
x(k, n):=(x_{k+1}, x_{k+2}, \ldots, x_{k+n}, 0, 0, \ldots),
\]
and
\[
x[k, n]:=(\underbrace{0, 0, \ldots, 0}_{n}, x_{k+n+1}, x_{k+n+2},
\ldots).
\]

The notion of distributional chaos was introduced by Schweizer and Sm\'{\i}tal \cite{Schweizer94}. Let $f: X\longrightarrow X$
be a continuous map defined on a metric space $(X, d)$. For any $x, y\in X$, $n\in \mathbb{N}$ and $t\in \mathbb{R}$, let
$$
\Phi_{x, y}^{(n)}(t)=\left|\{0\leq i<n: d(f^{i}(x), f^{i}(y))<t\}\right|.
$$
Define {\it lower} and {\it upper distributional functions}, $\mathbb{R}\longrightarrow [0, 1]$ generated by $f$, $x$ and $y$, as follows:
$$
\Phi_{x, y}(t)=\liminf_{n\rightarrow \infty}\frac{1}{n}\Phi_{x, y}^{(n)}(t),
$$
and
$$
\Phi_{x, y}^{*}(t)=\limsup_{n\rightarrow \infty}\frac{1}{n}\Phi_{x, y}^{(n)}(t),
$$
respectively, where where $|A|$ denotes the cardinality of set $A$. A subset $D\subset X$
is {\it distributionally $\varepsilon$-scrambled} if for any distinct points $x, y\in D$,
$\Phi_{x, y}^{*}(t)=1$ for any $t>0$ and $\Phi_{x, y}(\varepsilon)=0$. A pair satisfying
the above condition is called a {\it distributionally $\varepsilon$-chaotic pair}.

 During the last decades, many research works were devoted to the `chaotic behavior' of the backward shift operator on the K\"{o}the
sequence space (more generally, Banach or Fr\'{e}chet space) (see, e.g., \cite{Bermudez11,BBMP,GE,Martinez-Gimenez02,Martinez-Gimenez07,Martinez-Gimenez09,
Oprocha06,Wu11,Wu121,Wu13,Wu}). For example, Mart\'{\i}nez-Gim\'{e}nez and Peris \cite{Martinez-Gimenez02} obtained some characterizations
for hypercyclicity and Devaney chaos under backward shift on the K\"{o}the sequence space. Mart\'{\i}nez-Gim\'{e}nez \cite{Martinez-Gimenez07}
provided some sufficient conditions for the operator $f(B_{\mathrm{w}})$ to be chaotic in the sense of Devaney. Then, Berm\'{u}dez et al.
\cite{Bermudez11} proved some useful equivalent conditions for Li-Yorke chaos and a few sufficient criteria for distributionally chaotic operators.
Characterizations of hypercyclicity and Li-Yorke chaos for weighted shifts on more general sequence spaces were obtained in \cite{GE} and \cite{BBMP}, respectively. By employing methods developed in \cite{Bermudez11}, Wu and Zhu \cite{Wu121} proved that for a bounded operator defined on a Banach space, Li-Yorke chaos, Li-Yorke sensitivity, spatiotemporal chaos, and distributional chaos in a sequence are all equivalent, and they are all strictly stronger than sensitivity. Further results of \cite{Oprocha06} were extended to maximal distributional chaos for the annihilation operator of a quantum harmonic oscillator in \cite{Wu11,Wu131}. In 2009, Mart\'{\i}nez-Gim\'{e}nez et al. \cite{Martinez-Gimenez09} provided sufficient conditions for uniform distributional chaos under backward shift. Very recently, we \cite{Wu2016,Wu13,Wu,WWC17} provided a class of characterizations for uniform Li-Yorke chaos and a sufficient condition for maximal distributional chaos under backward shift on the K\"{o}the sequence space. Bernardes et al. \cite{Bernardes13} characterized distributional chaos for linear operators on Fr\'{e}chet spaces and obtained a sufficient condition to ensure the existence of dense uniformly distributionally irregular manifolds.

For quite a long time, operator theorists have been studying the so-called cyclic vectors in connection with the (invariant) subspace
problem \cite{Bes99,Golinski12,Godefroy91,Read84,Read88}. The invariant subspace problem, which is open to this day, asks whether every
Hilbert space operator possesses an invariant closed subspace other than the two trivial ones given by $\{0\}$ and the whole space.
Du \cite{Du05} proved that an interval map is turbulent if and only if there is an invariant scrambled set in 2005. Later, Oprocha \cite{Oprocha09}
extended this approach and proved that exactly the same characterization is valid for distributional chaos. Very recently, for the full shift
$(\Sigma_{2}, \sigma)$ on two symbols, we \cite{Wu1} constructed an invariant distributionally $\varepsilon$-scrambled set for any
$0<\varepsilon<\dim(\Sigma_{2})$, in which each point is transitive but is not weakly almost periodic.

\medskip

In \cite{Martinez-Gimenez09}, Mart\'{\i}nez-Gim\'{e}nez et al. proved the following:
\begin{theorem}\label{Th 2}\cite[Theorem 5]{Martinez-Gimenez09}
Let $A$ be a K\"{o}the matrix satisfying (\ref{2.5}), $1\leq
p<+\infty$ (or, $p=0$). If there exist $x, y\in \lambda_{p}(A)$ such
that $\Phi_{x, y}(\delta)=0$ holds for some $\delta>0$, then $B:
\lambda_{p}(A)\longrightarrow\lambda_{p}(A)$ has a
distributionally $\varepsilon$-scrambled subset for some $\varepsilon>0$.
\end{theorem}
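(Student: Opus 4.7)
The plan is to use the linearity of $B$ and the translation invariance of $d$ to upgrade the single pair $(x,y)$ into an uncountable scrambled family. Set $u = x - y$; then $B^i u = B^i x - B^i y$ and $d(B^i u, 0) = d(B^i x, B^i y)$, so $\Phi_{u, 0}(\delta) = 0$. Consequently, there exists a strictly increasing sequence $(N_k)_{k\geq 1}$ with
\[
\frac{1}{N_k}\bigl|\{0 \leq i < N_k : d(B^i u, 0) \geq \delta\}\bigr| \longrightarrow 1.
\]
This alone already supplies the ``often far'' half of the distributional chaos condition for iterates of $u$.

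The ``often close'' half must be manufactured. Note that for any finitely supported $w$ one has $B^N w = 0$ once $N$ exceeds the support length, and condition (\ref{2.5}) guarantees that blocks $u(0, L)$ placed far out along the index axis have arbitrarily small $\|\cdot\|_n$ for every fixed $n$. I would choose inductively very rapidly increasing integers $L_k$ and $M_k$ (with $M_k + L_k \ll M_{k+1}$) and, for each parameter $\xi = (\xi_k) \in \{0, 1\}^{\mathbb{N}}$ ranging over a Cantor-like uncountable set, define
\[
z_\xi = \sum_{k \geq 1} \xi_k \, v_k,
\]
where $v_k$ is the sequence whose coordinates $M_k+1, \ldots, M_k + L_k$ reproduce $u_1, \ldots, u_{L_k}$ and whose remaining coordinates vanish. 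The growth of $(L_k, M_k)$ is chosen so that (i) the series converges in every $\|\cdot\|_n$, (ii) at time $i = M_k$, the iterate $B^{M_k} z_\xi$ approximates $\xi_k u$ up to error $2^{-k}$ in every prescribed seminorm, and (iii) at the intermediate times $M_k + L_k \leq i < M_{k+1}$, the iterate $B^i z_\xi$ has $\|\cdot\|_n$ norm less than $2^{-k}$ for every $n \leq k$.

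For distinct $\xi, \eta$, let $k$ be any index with $\xi_k \neq \eta_k$. By (ii) and the choice of $(N_k)$, along the times $M_k + j$ with $j < N_k$, the orbit $B^{M_k + j}(z_\xi - z_\eta)$ is within $2^{-k+1}$ of $\pm B^j u$ in a suitable seminorm and hence stays outside a $\delta/2$-neighborhood of $0$ for a density approaching $1$ of such $j$. Accumulating this across the infinitely many indices where $\xi, \eta$ differ yields $\Phi_{z_\xi, z_\eta}(\delta/2) = 0$. Meanwhile, (iii) forces $d(B^i(z_\xi - z_\eta), 0)$ to be arbitrarily small at times $i = M_{k+1} - 1$ for large $k$, which delivers $\Phi^{*}_{z_\xi, z_\eta}(t) = 1$ for every $t > 0$. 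Thus $\{z_\xi\}$ is distributionally $\varepsilon$-scrambled for $\varepsilon = \delta/2$.

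The principal obstacle is the coordinated inductive bookkeeping for $(L_k, M_k)$: convergence of $z_\xi$ in every $\|\cdot\|_n$ (requiring $M_k$ to be large relative to the seminorm growth dictated by (\ref{2.5})), smallness of the tails contributed by blocks $j > k$ during all times $i < M_{k+1}$ (requiring $M_{k+1} \gg M_k + L_k$), and the need for $L_k$ to exceed $N_k$ so that block $k$ genuinely reproduces a long enough segment of the orbit of $u$, must all be balanced simultaneously. Once the inductive construction is in place, translating seminorm estimates into estimates on the metric $d = \sum_{n} 2^{-n} \|\cdot\|_n / (1 + \|\cdot\|_n)$ is routine, and uncountability of the scrambled set follows from the uncountability of the parameter space.
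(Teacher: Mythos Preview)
The paper does not prove this statement directly---it is quoted from \cite{Martinez-Gimenez09}---but it does prove the stronger Theorem~\ref{Th 1}, and comparing against that argument exposes a genuine gap in yours. Your building blocks $v_k$ place the \emph{initial} segment $u_1,\dots,u_{L_k}$ at positions $M_k+1,\dots,M_k+L_k$; in other words, $v_k$ is a \emph{forward} shift of a truncation of $u$. You then assert that condition~(\ref{2.5}) makes $\|v_k\|_n$ small once $M_k$ is large. But (\ref{2.5}) points in the opposite direction: it bounds $a_{j,n}/a_{j+1,m}$ from above, which is exactly continuity of $B$, and is perfectly compatible with $a_{j,n}$ \emph{growing} in $j$. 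Nothing in (\ref{2.5}) controls the norm of a forward shift, and the hypothesis $\Phi_{u,0}(\delta)=0$ does not help either, since it supplies lower bounds on $\|B^i u\|_n$ rather than upper bounds on $\|S^{M}(u(0,L))\|_n$. Without this, there is no reason for the series $z_\xi=\sum_k\xi_k v_k$ to converge in $\lambda_p(A)$, and the construction collapses at step~(i).

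The remedy, and the route taken in the paper's proof of Theorem~\ref{Th 1} (which refines that of \cite{Martinez-Gimenez09}), is to cut blocks out of the \emph{tail} of $u$ rather than transplant its head: take $(v_k)_j=u_j$ for $M_k<j\le M_k+L_k$ and $(v_k)_j=0$ otherwise. Then $|v_k|\le|u|$ coordinatewise, so $\|v_k\|_n\le\|u[0,M_k]\|_n\to 0$ automatically, and $B^{M_k}v_k=u(M_k,L_k)$ approximates $B^{M_k}u$ rather than $u$. Since the density hypothesis already concerns the iterates $B^i u$, choosing the $M_k$ from the sequence $(\mathrm N_k)$ makes the ``often far'' estimate immediate, and your item~(iii) goes through unchanged. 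The paper's additional devices (the growing multipliers $k\cdot|y_j|$ and the sign pattern encoded by $\mathscr C_{n,l}^{0}$, $\mathscr C_{n,l}^{1}$) are there only to secure invariance under $B$ and the full range $0<\varepsilon<\delta$, not to make the basic scrambled set exist.
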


Combining this with \cite[Theorem 3.3]{Wu2016}, Wu et al. \cite{Wu2016} provided the following question:
\begin{question}\label{Q1}\cite[Question 3.5]{Wu2016}
Does the hypothesis in Theorem \ref{Th 2} imply that $B$ has an invariant distributionally
scrambled linear manifold?
\end{question}

Being a partial answer to Question \ref{Q1}, this paper shall prove that the hypothesis in Theorem
\ref{Th 2} can ensure that $B$ admits an invariant distributionally $\varepsilon$-scrambled subset
for any $0<\varepsilon<\delta$ (see Theorem \ref{Th 1}).

\begin{theorem}\label{Th 1}
Let $A$ be a K\"{o}the matrix satisfying (\ref{2.5}), $1\leq p<+\infty$ (or, $p=0$). If there exist $x, y\in \lambda_{p}(A)$ such
that $\Phi_{x, y}(\delta)=0$ holds for some $\delta>0$, then $B: \lambda_{p}(A)\longrightarrow\lambda_{p}(A)$ has an
invariant distributionally $\varepsilon$-scrambled subset for any $0<\varepsilon<\delta$.
\end{theorem}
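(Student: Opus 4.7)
\smallskip

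\noindent \emph{Proof proposal.} The first step is to invoke the translation invariance of the metric $d$ on $\lambda_{p}(A)$ in order to replace the given pair $(x, y)$ by the single vector $u := x - y$, which satisfies $\Phi_{u, 0}(\delta) = 0$. Building on the constructions of \cite{Martinez-Gimenez09} and \cite{Bermudez11}, I would then refine $u$ into an auxiliary vector $w \in \lambda_{p}(A)$ whose forward orbit under $B$ exhibits two coexisting patterns: (a) arbitrarily long stretches of indices $n$ on which every seminorm $\|B^{n} w\|_{k}$ is arbitrarily small, forcing $\Phi_{w, 0}^{*}(t) = 1$ for every $t > 0$; and (b) a set of indices $n$ of positive lower density on which $\|B^{n} w\|_{1} \geq \delta - \eta$ for any preassigned $\eta > 0$, forcing $\Phi_{w, 0}(\varepsilon) = 0$ for every $\varepsilon < \delta$.

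The next step is to assemble an uncountable forward-invariant scrambled set out of $w$. For each binary sequence $\alpha = (\alpha_{j})$ drawn from a carefully chosen uncountable family $\Gamma \subseteq \{0, 1\}^{\mathbb{N}}$ closed under the left shift $\sigma$, I would define
$$
w_{\alpha} := \sum_{j=1}^{\infty} \alpha_{j}\, \lambda_{j}\, v^{(j)},
$$
where $v^{(j)} \in \lambda_{p}(A)$ is obtained by right-shifting the truncation $w(0, L_{j})$ by $N_{j}$ coordinates, with the integers $N_{j}, L_{j}$ and scalars $\lambda_{j}$ chosen so that the series converges, consecutive blocks are well separated along the orbit, and $B w_{\alpha} = w_{\sigma(\alpha)}$ holds exactly. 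The set $D := \{w_{\alpha} : \alpha \in \Gamma\}$ then satisfies $B(D) \subseteq D$.

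To verify distributional $\varepsilon$-scrambling for every $\varepsilon < \delta$, I would use translation invariance once more: for distinct $\alpha, \beta \in \Gamma$ the difference $w_{\alpha} - w_{\beta}$ is precisely the sum of blocks indexed by the symmetric difference $\alpha \bigtriangleup \beta$, so its orbit inherits both properties (a) and (b) from $w$. Computing $\Phi_{w_{\alpha}, w_{\beta}}^{*}$ and $\Phi_{w_{\alpha}, w_{\beta}}$ via the resulting block-level density estimates then yields the scrambling condition uniformly across $D$.

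The main obstacle will be the simultaneous coordination of three requirements: (i) refining $u$ into $w$ so that patterns (a) and (b) coexist --- Theorem~\ref{Th 2} already extracts a vector good for \emph{some} $\varepsilon > 0$, but capturing the sharp range $0 < \varepsilon < \delta$ demands finer block estimates tied to the seminorms $\|\cdot\|_{k}$ and condition \eqref{2.5}; (ii) choosing $\Gamma$ uncountable, shift-closed, and compatible with the block positions $N_{j}$ so that the invariance $B(D) \subseteq D$ holds on the nose rather than merely up to bounded error; and (iii) transferring the density estimates from $w$ itself to every symmetric difference $\alpha \bigtriangleup \beta$ with uniform control across $\Gamma$, which is what ultimately propagates $\varepsilon$-scrambling from the single witness pair to the entire set $D$.
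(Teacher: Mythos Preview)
Your proposal contains a genuine gap at the invariance step.  With the blocks $v^{(j)}$ placed at well-separated positions $N_{j}$, a single application of $B$ merely translates every block one coordinate to the left; it cannot relabel the $j$-th block as the $(j-1)$-th.  Hence the identity $B w_{\alpha}=w_{\sigma(\alpha)}$ is impossible unless $N_{j+1}-N_{j}=1$, which destroys the separation you need for the density estimates.  The only straightforward way to make $D$ forward-invariant is the one you did not take: replace $D$ by its full forward orbit $\bigcup_{n\ge 0}B^{n}D$.  But then you must compare $B^{p}w_{\alpha}$ with $B^{q}w_{\beta}$ for $p\neq q$, and here your ``symmetric difference'' argument collapses: the vector $w_{\alpha}-B^{q-p}w_{\beta}$ is no longer a signed sum of the original blocks, because the blocks of $w_{\beta}$ have been slid by $q-p$ coordinates and may overlap and cancel those of $w_{\alpha}$.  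Nothing in your outline prevents $\Phi_{B^{p}w_{\alpha},\,B^{q}w_{\beta}}(\varepsilon)>0$ in that case.

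The paper resolves exactly this difficulty by abandoning the binary-coding scheme altogether.  It builds a \emph{single} vector $\bar{\nu}$ whose entries carry a sign pattern indexed by powers of the odd primes: on the block associated with $\mathrm{P}_{n}^{\,l}$ the sign alternates in runs of length $l$.  The scrambled set is then simply $\mathscr{D}=\bigcup_{n\ge 0}B^{n}\{\alpha\bar{\nu}:\alpha\in(0,1)\}$, which is forward-invariant by construction.  For two points $B^{p}(\alpha\bar{\nu})$ and $B^{q}(\beta\bar{\nu})$ with $l=q-p>0$, one looks at the block attached to $\mathrm{P}_{n}^{\,l}$: shifting it by $l$ flips every sign, so $(\alpha\bar{\nu})_{j}$ and $(B^{l}\beta\bar{\nu})_{j}$ have opposite signs there and the difference dominates $|\alpha\bar{\nu}_{j}|$ rather than cancelling.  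This prime-power sign device, together with an amplification $\nu_{j}=k|y_{j}|$ on the $k$-th block, is precisely what recovers the full range $0<\varepsilon<\delta$ and is the idea missing from your outline.
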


\begin{proof} By the invariability of the metric $d$, we may assume that $x=0$ and $y=(y_{1}, y_{2}, y_{3}, \ldots)$.
Since $\Phi_{x, y}(\delta)=0$, there exists an increasing sequence $\left\{\mathrm{N}_{k}\right\}_{k\in \mathbb{N}}\subset \mathbb{N}$
such that
\begin{equation}\label{4.6}
\lim_{k\rightarrow \infty}\frac{1}{\mathrm{N}_{k}}\left|\{0\leq i<\mathrm{N}_{k}:
d(B^i(x), B^i(y))\geq \delta\}\right|=1.
\end{equation}
It is not difficult to
check that there exists a subsequence $\{\mathrm{M}_{k}\}_{k\in \mathbb{N}}$
of $\left\{\mathrm{N}_{k}\right\}_{k\in \mathbb{N}}$ such that for any $k\in \mathbb{N}$,
\begin{equation}\label{4.7}
\mathrm{M}_{k+1}-\mathrm{M}_{k}\geq 4^{\mathrm{M}_{k}},
\end{equation}
\begin{equation}\label{4.8}
\sum_{j=\mathrm{M}_{k}}^{\infty}\left|y_{j}a_{j,
k}\right|^{p}<\frac{1}{2^{k}},
\end{equation}
and that for any $\mathrm{M}_{2k}< j\leq \mathrm{M}_{2k+1}$,
\begin{equation}\label{4.9}
d\left(0, y\left(j, \mathrm{M}_{2k+2}-\mathrm{M}_{2k+1}\right)\right)\geq d(0,B^{j}(y))-\frac{1}{2k}.
\end{equation}
Define $\nu=(\nu_{1}, \nu_{2}, \nu_{3}, \ldots)$ by
$$
\nu_{j}=
\begin{cases}
k\cdot |y_{j}|, &  \mathrm{M}_{4k}< j\leq \mathrm{M}_{4k+3}, k\in \mathbb{N}, \\
0, & \text{otherwise}.
\end{cases}
$$
Because
\begin{eqnarray*}
\sum_{j=\mathrm{M}_{k}}^{\infty}|\nu_{j}a_{j, k}|^{p}&=&\sum_{l\geq
k}\sum_{j=\mathrm{M}_{l}+1}^{\mathrm{M}_{l+1}}|\nu_{j}a_{j, k}|^{p}\leq \sum_{l\geq
k}\sum_{j=\mathrm{M}_{l}+1}^{\mathrm{M}_{l+1}}l^{p}|y_{j}a_{j, k}|^{p}\\
&\leq& \sum_{l\geq k}\sum_{j=\mathrm{M}_{l}+1}^{\mathrm{M}_{l+1}}l^{p}|y_{j}a_{j,l}|^{p}
\leq \sum_{l\geq k}\frac{l^{p}}{2^{l}}<+\infty,
\end{eqnarray*}
one has $\nu\in \lambda_{p}(A)$. Applying the method of induction, it
can be verified that there exists a subsequence
$\left\{\mathrm{M}_{k_{n}}\right\}_{n\in \mathbb{N}}$ of
$\left\{\mathrm{M}_{k}\right\}_{k\in \mathbb{N}}$ such that for any $n\in
\mathbb{N}$,
$$
\left\{k_{2n}: n\in \mathbb{N}\right\}\subset \left\{4k: k\in
\mathbb{N}\right\}, \quad k_{1}=1, \quad k_{2n+1}=k_{2n}+3,
$$
and that for any $\mathrm{M}_{k_{2n-1}}< j\leq \mathrm{M}_{k_{2n-1}}+2^{M_{k_{2n-1}}}$,
\begin{equation}\label{4.10}
d\left(0, \nu\left[j,\mathrm{M}_{k_{2n}}-4^{\mathrm{M}_{k_{2n-1}}}\right]\right)\leq\frac{1}{2n}.
\end{equation}

Arrange all odd prime numbers by the natural order `$<$' and denote
them as $\mathrm{P}_{1}, \mathrm{P}_{2}, \ldots$. For any $n, l\in
\mathbb{N}$, set  
\begin{eqnarray*}
\mathscr{C}_{n, l}^{0}=\Bigg\{j\in \mathbb{N}:&
\mathrm{M}_{k_{\mathrm{P}_{n}^{l}+1}}+(2u)l<j\leq
\mathrm{M}_{k_{\mathrm{P}_{n}^{l}+1}}+(2u+1)l, \\
& 0\leq 2u\leq
\left[\frac{\mathrm{M}_{k_{\mathrm{P}_{n}^{l}+1}+3}-\mathrm{M}_{k_{\mathrm{p}_{n}^{l}+1}}}{l}\right]-1\Bigg\},
\end{eqnarray*}
and
$$
\mathscr{C}_{n, l}^{1}=\left\{j\in \mathbb{N}:
\mathrm{M}_{k_{\mathrm{P}_{n}^{l}+1}}< j\leq
\mathrm{M}_{k_{\mathrm{P}_{n}^{l}+1}+3}\right\}-\mathscr{C}_{n, l}^{0}.
$$
Take $\bar{\nu}=(\bar{\nu}_{1}, \bar{\nu}_{2}, \bar{\nu}_{3},
\ldots)\in \lambda_{p}(A)$ with
$$
\bar{\nu}_{j}=
\begin{cases}
|\nu_j|, &  j\in \mathscr{C}_{n, l}^{0}, n, l \in \mathbb{N}, \\
-|\nu_j|, &  j\in \mathscr{C}_{n, l}^{1}, n, l\in \mathbb{N}, \\
0, & \text{otherwise},
\end{cases}
$$
and set
$$
\mathscr{D}=\bigcup_{n=0}^{\infty}B^{n}\left(\left\{\alpha \bar{\nu}:
\alpha\in (0, 1)\right\}\right).
$$
Clearly, $B(\mathscr{D})\subset \mathscr{D}$ and $\mathscr{D}$ is uncountable. Given any two fixed points
$a$, $b \in \mathscr{D}$ with $a\neq b$, there exist $\alpha, \beta\in (0, 1)$ and $p, q\in \mathbb{Z}^{+}$ such that
$a=B^{p}(\alpha\bar{\nu})$ and $b=B^{q}(\beta\bar{\nu})$. Without loss of generality, assume that $p\leq q$.

\medskip

Now, we assert that $(a, b)$ is a distributionally $\varepsilon$-chaotic pair for any $0<\varepsilon<\delta$.

\medskip

Firstly, for any $\mathrm{M}_{k_{2n-1}}<j\leq \mathrm{M}_{k_{2n-1}}+2^{M_{k_{2n-1}}}-q$, noting that
$\mathrm{M}_{k_{2n}}-4^{\mathrm{M}_{k_{2n-1}}}\leq \mathrm{M}_{k_{2n}}-(j+q)$, and applying
(\ref{4.10}), it follows that
\begin{eqnarray*}
&&d(B^{j}(a), B^{j}(b))\\
&\leq& d(0, B^{j}(a))+d(0,
B^{j}(b))\\
&\leq& d(0, B^{j+p}(\nu))+d(0, B^{j+q}(\nu))\\
&\leq&\frac{1}{n}\longrightarrow 0, \
(n\longrightarrow \infty).
\end{eqnarray*}
Then, given any $t>0$, there exists some $\mathrm{N}\in \mathbb{N}$ such that for any $n\geq \mathrm{N}$ and any
$\mathrm{M}_{k_{2n-1}}<j\leq \mathrm{M}_{k_{2n-1}}+2^{\mathrm{M}_{k_{2n-1}}}-q$,
$$
d(B^{j}(a), B^{j}(b))<t,
$$
implying that
\begin{eqnarray*}
&&\Phi_{a, b}^{*}(t)\\
&=&\limsup_{n\rightarrow\infty}\frac{1}{n}\left|\{0\leq i<n: d(B^i(a), B^i(b))<t\}\right|\\
&\geq&\limsup_{n\rightarrow\infty}\frac{1}{\mathrm{M}_{k_{2n-1}}+2^{\mathrm{M}_{k_{2n-1}}}-q}
\left|\left\{0\leq i<\mathrm{M}_{k_{2n-1}}+2^{\mathrm{M}_{k_{2n-1}}}-q: d(B^i(a), B^i(b))<t\right\}\right|\\
&\geq&\limsup_{n\rightarrow\infty}\frac{2^{\mathrm{M}_{k_{2n-1}}}-q}{\mathrm{M}_{k_{2n-1}}+2^{\mathrm{M}_{k_{2n-1}}}-q}\\
&=&1.
\end{eqnarray*}

\medskip

Second, to prove $\Phi_{a, b}(\varepsilon)=0$ for any $0<\varepsilon<\delta$, we consider two cases as follows:

\medskip

\textbf{Case 1.} $p=q$ and $\alpha\neq\beta$. Noting that for any
$\mathrm{M}_{k_{\mathrm{P}_{n}+1}}< j\leq \mathrm{M}_{k_{\mathrm{P}_{n}+1}+1}$ and any $1\leq i\leq
\mathrm{M}_{k_{\mathrm{P}_{n}+1}+2}-\mathrm{M}_{k_{\mathrm{P}_{n}+1}+1}$,
$\left|\left(B^{j}\left((\alpha-\beta)
\bar{\nu}\right)\right)_{i}\right|= \big|\left((\alpha-\beta)
\frac{k_{\mathrm{P}_{n}+1}}{4}y(j)\right)_{i}\big|$, and applying (\ref{4.9}),
it follows that for all $n\in\mathbb{N}$ with
$\left|\alpha-\beta\right| \frac{k_{\mathrm{P}_{n}+1}}{4}>1$ and any
$\mathrm{M}_{k_{\mathrm{P}_{n}+1}}< j\leq \mathrm{M}_{k_{\mathrm{P}_{n}+1}+1}$,
\begin{eqnarray*}
&&d(B^{j}(\alpha\bar{\nu}), B^{j}(\beta\bar{\nu}))\\
&=&d(0, B^{j}((\alpha-\beta)\bar{\nu}))\\
&\geq & d\left(0, y\left(j, \mathrm{M}_{k_{\mathrm{P}_{n}+1}+2}-\mathrm{M}_{k_{\mathrm{P}_{n}+1}+1}\right)\right)\\
&\geq& d(0,B^{j}(y))-\frac{1}{k_{\mathrm{P}_{n}+1}}.
\end{eqnarray*}
This, together with (\ref{4.6}) and (\ref{4.7}), implies that for any $0<\varepsilon<\delta$,
\begin{eqnarray*}
&&\Phi_{a, b}(\varepsilon)\\
&=&1-\limsup_{n\rightarrow \infty}\frac{1}{n}\left|\{0\leq i<n: d(B^i(a), B^i(b))\geq \varepsilon\}\right|\\
&\leq& 1-\limsup_{n\rightarrow\infty}\frac{1}{\mathrm{M}_{k_{\mathrm{P}_{n}+1}+1}}\left|\{0\leq i<\mathrm{M}_{k_{\mathrm{P}_{n}+1}+1}: d(B^i(a), B^i(b))\geq \varepsilon\}\right|\\
&\leq& 1-\limsup_{n\rightarrow \infty}
\frac{\Phi_{0, y}^{(\mathrm{M}_{k_{\mathrm{P}_{n}+1}+1})}(\delta)-\mathrm{M}_{k_{\mathrm{P}_{n}+1}}}{\mathrm{M}_{k_{\mathrm{P}_{n}+1}+1}}\\
&=&0.
\end{eqnarray*}

\textbf{Case 2.} $q>p$. Fix any $n\in \mathbb{N}$ with
$\alpha\frac{k_{\mathrm{P}_{n}^{q-p}+1}}{4}>1$.
For any $\mathrm{M}_{k_{\mathrm{P}_{n}^{q-p}+1}}<i \leq
\mathrm{M}_{k_{\mathrm{P}_{n}^{q-p}+1}+1}$ and any $i+1\leq j <i+(\mathrm{M}_{k_{\mathrm{P}_{n}^{q-p}}+3}-
\mathrm{M}_{k_{\mathrm{P}_{n}^{q-p}}+2})$, noting that $\left|(\alpha\bar{\nu}-B^{q-p}(\beta\bar{\nu}))_j\right|\geq
\left|(\alpha\bar{\nu})_j\right|$ (as $(\alpha\bar{\nu})_j$ and $(B^{q-p}(\beta\bar{\nu}))_j$
are of different signs), it follows that
\begin{eqnarray*}
&&d\left(B^{i-p}(a), B^{i-p}(b)\right)\\
&=&d(0, B^{i}(\alpha\bar{\nu}-B^{q-p}(\beta\bar{\nu})))\\
&\geq& d(0, \alpha\bar{\nu}(i, \mathrm{M}_{k_{\mathrm{P}_{n}^{q-p}}+3}-
\mathrm{M}_{k_{\mathrm{P}_{n}^{q-p}}+2})).
\end{eqnarray*}
Similarly to the proof of Case 1, it can be verified that for any $0<\varepsilon<\delta$,
$\Phi_{a, b}(\varepsilon)=0$.

Therefore, $\mathscr{D}$ is an invariant distributionally $\varepsilon$-scrambled subset for any $0<\varepsilon<\delta$.
\end{proof}

\begin{remark}
\begin{enumerate}[(1)]
\item Given a sequence $\{\mathrm{w}_{i}\}_{i\geq 2}$ of strictly positive scalars, consider its associated {\it weighted backward shift}
$$
B_{\mathrm{w}}(x_1, x_2, \ldots):=(\mathrm{w}_{2}x_2,
\mathrm{w}_3x_3, \ldots).
$$
According to the discussions in \cite{Martinez-Gimenez07, Martinez-Gimenez09}, the study of chaos under a weighted
backward shift can be reduced to the unweighted case, with a suitable K\"{o}the matrix. So, for weighted backward shift, we
actually have also obtained similar result. \item Applying Theorem \ref{Th 1}, it is easy to verify that all examples in
\cite{Martinez-Gimenez09} admit an invariant $\varepsilon$-scrambled subset for some $\varepsilon>0$.
\item Combining Theorem \ref{Th 1} with \cite[Theorem 2.1]{Wu}, it follows that \cite[Theorem 3.1]{WWC17} holds trivially.
\end{enumerate}
\end{remark}

\section*{Acknowledgments}
{\footnotesize This work was supported by the National Natural Science Foundation of China (No. 11601449),
the National Nature Science Foundation of China (Key Program) (No. 51534006), Science and Technology Innovation Team of Education
Department of Sichuan for Dynamical System and its Applications (No. 18TD0013), Youth Science and Technology Innovation Team of
Southwest Petroleum University for Nonlinear Systems (No. 2017CXTD02), and Scientific Research Starting Project of Southwest Petroleum
University (No. 2015QHZ029).}

\baselineskip=2pt

\end{document}